\newtheorem*{theorem*}{Theorem}
\newtheorem*{corollary*}{Corollary}
\newtheorem*{lemma*}{Lemma}
\newtheorem*{remark*}{Remark}
\title{Characterizing $(0,\pm 1)$-matrices with only even-rank principal submatrices in terms of skew-symmetry}
\author{Robert Brijder\footnote{R.B.\ is a postdoctoral fellow of the Research Foundation -- Flanders (FWO).}}
\date{}
\affil{Hasselt University, Belgium}
\begin{document}

\maketitle

\begin{abstract}
We show that every principal submatrix of a square matrix $A$ with only entries in $\{0,1,-1\}$ has even rank if and only if $A$ is skew-symmetric up to multiplication of rows/columns by $-1$.
\end{abstract}

We say that an $n \times n$-matrix $A$ over some field $\mathbb{F}$ is \emph{skew-symmetric} if $A^T=-A$ and every diagonal entry is zero (the latter condition is redundant if $\mathbb{F}$ is not of characteristic two). The \emph{principal submatrix} of $A$ induced by $I \subseteq \{1,\ldots,n\}$ is the matrix obtained from $A$ by removing the rows and columns indexed by $\{1,\ldots,n\} \setminus I$. Since the rank of a skew-symmetric matrix is even, the rank is even of every principal submatrix of a matrix $A$ that can be obtained from a skew-symmetric matrix by multiplying rows and columns by nonzero scalars. We show that the converse holds in case $A$ has only entries in $\{0,1,-1\}$.

We first recall the Schur complement. If a square matrix $A$ is of the form
$
\begin{pmatrix}
A_1 & A_2 \cr
A_3 & A_4
\end{pmatrix}
$
with $A_1$ invertible, then the \emph{Schur complement} of $A$ on $A_1$ is the matrix $A/A_1 := A_4 - A_3 A_1^{-1} A_2$. The Guttman rank additivity formula says that $r(A) = r(A_1)+r(A/A_1)$, where $r$ denotes the rank of a matrix (see, e.g., \cite[Section~0.9]{SchurBook2005}).

\begin{lemma*}
Let $A$ be an $n \times n$-matrix, with $n \geq 3$, over some field and of the form
\[
\begin{pmatrix}
0 &      -c &       -1 &        &        &         &         \cr
c &       0 &        0 &     -1 &        &         &         \cr
1 &       0 &   \ddots & \ddots & \ddots &         &         \cr
  &       1 &   \ddots & \ddots & \ddots &      -1 &         \cr
  &         &   \ddots & \ddots & \ddots &       0 &      -1 \cr
  &         &          &      1 &      0 &       0 &       b \cr
  &         &          &        &      1 &       a &       0
\end{pmatrix},
\]
where $a,b,c \in \{-1,1\}$ and the blank upper-right and lower-left regions have only zero entries. In particular, if $n=3$ or $n = 4$, then we have 
\[
A = 
\begin{pmatrix}
0 & -c & -1 \cr
c &  0 &  b \cr
1 &  a &  0
\end{pmatrix}
\quad \text{or} \quad
A = 
\begin{pmatrix}
0 & -c & -1 &  0 \cr
c &  0 &  0 & -1 \cr
1 &  0 &  0 &  b \cr
0 &  1 &  a &  0
\end{pmatrix},
\]
respectively. 

If the rank of $A$ is even, then $a = -b$ (i.e., $A$ is skew-symmetric).
\end{lemma*}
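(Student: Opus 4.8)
The plan is to reduce, by a Schur-complement recursion, to the two base cases $n=3$ and $n=4$, which I would then settle by direct determinant expansions. First note that I may assume the field has characteristic $\ne 2$: in characteristic $2$ we have $\{-1,1\}=\{1\}$, so $a=b=1=-1=-b$ and there is nothing to prove.

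Write $A=A_n(a,b,c)$ for the displayed matrix, and for $n\ge 5$ partition it as $\left(\begin{smallmatrix}A_1&A_2\\A_3&A_4\end{smallmatrix}\right)$ with $A_1=\left(\begin{smallmatrix}0&-c\\c&0\end{smallmatrix}\right)$ the principal $2\times 2$ block on $\{1,2\}$. Then $A_1$ is invertible with $A_1^{-1}=\left(\begin{smallmatrix}0&c\\-c&0\end{smallmatrix}\right)$ (as $c^2=1$), while the first two rows and columns show that $A_2=-[\,I_2\mid 0\,]$ and $A_3=\left[\begin{smallmatrix}I_2\\0\end{smallmatrix}\right]$. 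The bottom-right block $A_4$ is the principal submatrix of $A$ on $\{3,\dots,n\}$; since the nonzero band entries $\pm 1$ sit two steps off the diagonal and the entries $a,b$ occupy the last two coordinates, the top-left $2\times 2$ corner of $A_4$ is zero, and otherwise $A_4$ agrees with the size-$(n-2)$ member of the family. A short computation gives $A_3A_1^{-1}A_2=\left(\begin{smallmatrix}0&-c\\c&0\end{smallmatrix}\right)\oplus 0$, so the Schur complement $A/A_1=A_4-A_3A_1^{-1}A_2$ installs the block $\left(\begin{smallmatrix}0&c\\-c&0\end{smallmatrix}\right)$ in the previously-zero corner; that is, $A/A_1=A_{n-2}(a,b,-c)$. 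By the Guttman rank additivity formula, $r(A_n(a,b,c))=2+r(A_{n-2}(a,b,-c))$, so the two ranks have the same parity.

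For the base cases I expand determinants. If $n=3$, then $\det A=-c(a+b)$, which is nonzero exactly when $a=b$ (this is where I use characteristic $\ne 2$), so an even rank forces $a=-b$. If $n=4$, then $\det A=-ab-ac+bc+1$; when $a=b$ this is $-a^2-ac+ac+1=0$, so $r(A)\le 3$, and the $3\times 3$ minors on rows $\{2,3,4\}$, columns $\{1,2,4\}$ and on rows $\{1,2,4\}$, columns $\{2,3,4\}$ equal $-(ac+1)$ and $1-ac$; since $ac\in\{-1,1\}$, at least one of these is $\pm 2\ne 0$, so $r(A)=3$ is odd. Hence for $n=4$ as well an even rank forces $a=-b$.

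The lemma now follows by induction on $n$: the cases $n\in\{3,4\}$ are handled above, and for $n\ge 5$, if $r(A_n(a,b,c))$ is even then $r(A_{n-2}(a,b,-c))$ is even by the recursion, whence $a=-b$ by the inductive hypothesis applied to $A_{n-2}(a,b,-c)$ (the change of sign in the third parameter does not affect the conclusion). I expect the main technical point to be the bookkeeping behind the recursion — verifying that $A_4$ really is the size-$(n-2)$ member of the family with its top-left corner zeroed, and that subtracting $A_3A_1^{-1}A_2$ restores exactly the corner belonging to parameter $-c$.
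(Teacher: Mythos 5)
Your proof is correct and follows essentially the same route as the paper: a Schur complement on the leading $2\times 2$ block indexed by $\{1,2\}$ together with the Guttman rank additivity formula reduces the claim to the cases $n\in\{3,4\}$. The only difference is that you settle these base cases by direct determinant and minor computations, whereas the paper applies the same Schur complement once more; both verifications are routine and your computations check out.
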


\begin{proof}
First assume that $n \geq 5$. Since 
\[
\begin{pmatrix}
0 & 0 \cr
0 & 0
\end{pmatrix}
-
\begin{pmatrix}
1 & 0 \cr
0 & 1
\end{pmatrix}
\cdot
\begin{pmatrix}
0 & -c \cr
c & 0
\end{pmatrix}^{-1}
\cdot
\begin{pmatrix}
-1 & 0 \cr
0 & -1
\end{pmatrix}
=
\begin{pmatrix}
0 & 1/c \cr
-1/c & 0
\end{pmatrix},
\]
we observe that applying the Schur complement on the $2 \times 2$ principal submatrix 
$
A_1 = \begin{pmatrix}
0 & -c \cr
c & 0
\end{pmatrix}
$
induced by $\{1,2\}$ (i.e., the first two rows/columns) obtains a matrix $A/A_1$ that is of the same form as $A$ above. By the above recalled Guttman rank additivity formula, it suffices to verify the cases where $n = 3$ and $n = 4$. 

In the case where $n = 3$, applying the Schur complement on the principal submatrix 
$
\begin{pmatrix}
0 & -c \cr
c & 0
\end{pmatrix}
$
induced by $\{1,2\}$ obtains the $1 \times 1$-matrix with entry $(a + b)/c$ which is zero if and only if $a = -b$.

In the case where $n = 4$, applying the Schur complement in the same way as above obtains the $2 \times 2$-matrix
$
\begin{pmatrix}
0 & b+1/c \cr
a-1/c & 0
\end{pmatrix}
$
which has rank $0$ if and only if $a = 1/c = -b$ and has rank $2$ if and only if $a \neq 1/c \neq -b$. Since $a, b, c \in \{-1,1\}$, we have $a = -1/c = -b$ in the latter case.
\end{proof}

We are now ready to state the main result.
\begin{theorem*}
Let $A$ be an $n \times n$-matrix over some field with only entries in $\{0,1,-1\}$. Then every principal submatrix of $A$ has even rank if and only if $A$ is skew-symmetric up to multiplication of rows/columns by $-1$.
\end{theorem*}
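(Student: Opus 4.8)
The plan is to prove the ``only if'' direction — the ``if'' direction is exactly the remark made just before the Lemma — by translating the condition into a statement about a signed graph and feeding cycles of that graph into the Lemma.

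First I will extract the easy consequences of the hypothesis. Applying it to $1\times1$ principal submatrices forces every diagonal entry of $A$ to be $0$; applying it to the $2\times2$ principal submatrix $\bigl(\begin{smallmatrix}0&A_{ij}\\A_{ji}&0\end{smallmatrix}\bigr)$, whose rank is $0$ or $2$ but never $1$, gives $A_{ij}=0\iff A_{ji}=0$. Let $G$ be the simple loopless graph on $\{1,\dots,n\}$ with an edge $\{i,j\}$ exactly when $A_{ij}\neq0$, and give each edge the sign $\sigma_{ij}:=-A_{ij}A_{ji}\in\{1,-1\}$. I then want to see that $A$ is skew-symmetric up to multiplication of rows/columns by $-1$ if and only if $(G,\sigma)$ is \emph{balanced}, i.e.\ every cycle of $G$ has an even number of $\sigma$-negative edges. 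Indeed, for a diagonal $\pm1$ matrix $D=\operatorname{diag}(u_1,\dots,u_n)$ the matrix $DA$ still has entries in $\{0,1,-1\}$ and zero diagonal, and $(DA)^T=-DA$ holds iff $u_iu_j=\sigma_{ij}$ for every edge $\{i,j\}$; read over $\mathbb{F}_2$, such $u$ exists iff the right-hand side is orthogonal to the cycle space of $G$, which is precisely balance, and allowing independent row and column scalings adds nothing since only the products $(D_1)_{ii}(D_2)_{ii}$ enter. So it remains to derive balance of $(G,\sigma)$ from the rank hypothesis.

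Next I reduce to chordless cycles: a chord of a cycle cuts it into two strictly shorter cycles whose edge sets $\mathbb{F}_2$-sum to it, so every cycle of $G$ is an $\mathbb{F}_2$-sum of chordless ones, and it suffices to show each chordless cycle has an even number of $\sigma$-negative edges. Fix a chordless cycle $C$, of length $k\geq3$. Since $C$ is chordless, $A[C]$ is supported exactly on $C$. A harmless simultaneous relabelling of the rows and columns indexed by $C$ lets me assume its cyclic order is that of the Lemma's matrix; then, since a spanning path $P$ of $C$ carries no cycle, the $\pm1$ row and column scalings act surjectively on the signs of the entries of $A$ lying on $P$, so I can rescale to make those entries equal to the entries of the Lemma's matrix along $P$. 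After this normalization $A[C]$ is literally the $k\times k$ matrix of the Lemma for some $a,b,c\in\{1,-1\}$, its path edges all have $\sigma$-value $1$, and the one remaining edge has $\sigma$-value $-ab$. Since $r(A[C])$ is even, the Lemma gives $a=-b$, so after normalization every edge of $C$ has $\sigma$-value $1$; as $\pm1$ row/column scalings preserve the parity of the number of $\sigma$-negative edges on each cycle, $C$ was balanced to begin with. This makes every chordless cycle, hence every cycle, hence $(G,\sigma)$, balanced, and we are done.

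The technical heart — and the step most likely to need care — is that normalization: one must check that the Lemma's banded matrix really does have a $k$-cycle as its support graph for every $k\geq3$, that an arbitrary chordless $k$-cycle of $G$ can be relabelled into that cyclic order, and that the $\pm1$ row/column scalings genuinely suffice to match the prescribed entries along a spanning path (which should follow by processing the path from one end, choosing each new vertex's row and column scalars so as to fix its two incident path entries). The remaining ingredients — the reduction to balance, the chordless-cycle reduction, and the invariance of balance under $\pm1$ scaling — are routine.
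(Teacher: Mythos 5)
Your proposal is correct, and it reaches the theorem by a genuinely different route than the paper, even though both arguments funnel everything into the same Lemma. The paper works globally: it simultaneously reorders rows/columns so that the function $m(i)$ (the first nonzero position in column $i$) is monotone, greedily rescales so that $A_{i,m(i)}=-A_{m(i),i}=1$, and then proves $A_{k,l}=-A_{l,k}$ for each offending pair by exhibiting the principal submatrix induced by the sequence $k,l,m(k),m(l),\ldots$, which is of the Lemma's form either in full or after truncating at the first nonzero ``chord'' entry $c_t$. You instead encode the obstruction as a signed graph ($\sigma_{ij}=-A_{ij}A_{ji}$), observe that skew-symmetrizability by $\pm 1$ scalings is exactly balance (only the products $u_iv_i$ matter, and the solvability of $w_iw_j=\sigma_{ij}$ over $\mathbb{F}_2$ is orthogonality to the cycle space), reduce balance to chordless cycles via the fact that they generate the cycle space, and then check each chordless cycle by relabelling and renormalizing it into the Lemma's matrix. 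All the steps you flag as needing care do go through: the Lemma's matrix is supported on a Hamiltonian cycle in the cyclic order $1,2,4,6,\ldots,n,\ldots,5,3,1$; the $2(k-1)$ sign conditions along a spanning path correspond to linearly independent forms $x_i+y_j$ over $\mathbb{F}_2$ (strip a leaf), so the scalings do realize any prescribed signs there; and the product of $\sigma$ over a cycle is a scaling invariant, so balance transfers back to the original matrix. What your route buys is modularity and transparency: it isolates exactly where the $\{0,1,-1\}$ hypothesis is used (in the Lemma and in realizing signs on a tree), makes clear that only induced cycles matter, and replaces the paper's somewhat delicate ordering-plus-double-induction with standard signed-graph machinery. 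What the paper's route buys is self-containedness and constructiveness: it never invokes balance or the cycle space, it produces the scaling explicitly in one global pass, and its $m$-sequence automatically lands on a Lemma-shaped submatrix without the per-cycle relabelling and renormalization your argument requires. To turn your sketch into a complete write-up you would only need to spell out the normalization step, which you have correctly identified and correctly outlined.
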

\begin{proof}
It is well-known that every skew-symmetric matrix has even rank, see, e.g., \cite[Chapter XV, Theorem~8.1]{LangAlgebraBook}. 
Therefore, the if implication holds. 

Conversely, assume that every principal submatrix of $A$ has even rank. Consequently, every diagonal entry of $A$ is zero and entry $A_{i,j} \neq 0$ if and only if $A_{j,i} \neq 0$. For every index $i$, we denote by $m(i)$ the smallest index such that $A_{m(i),i} \neq 0$ (or, equivalently, $A_{i,m(i)} \neq 0$). If there is no such value, then we set $m(i)$ to be equal to $n+1$.

Without loss of generality, we may assume the rows and columns are simultaneously reordered such that if $1 < i \leq j$ for indices $i$ and $j$, then $m(i) \leq m(j)$. As a result, we have that if the row/column with index $i \neq 1$ (i.e., not the first row/column) is nonzero, then $m(i) < i$.

Since $1 < i \leq j$ implies $m(i) \leq m(j)$, by multiplying nonzero rows and columns with index $i > 1$ by $-1$ when $A_{i,m(i)} = -1$ and $A_{m(i),i} = 1$, respectively, in the order of increasing $i$, we may assume that $A_{i,m(i)} = -A_{m(i),i} = 1$ for all indices $i$. 

We prove that $A$ is now skew-symmetric. Let $k>l$ be such that $A_{k,l} \neq 0$. Assume  that for all $(i,j) \neq (k,l)$ with $i \leq k$ and $j \leq l$ we have $A_{i,j} = -A_{j,i}$. We show that $A_{k,l} = -A_{l,k}$. Consider the sequence $S = k,l,m(k),m(l),m^2(k),m^2(l),\ldots$ (here, $m^t$ denotes iteratively applying function $m$ $t$ times). In other words, $S$ is such that element $s_i$ is equal to $k$ if $i = 1$, to $l$ if $i=2$, and to $m(s_{i-2})$ if $i \geq 3$. 
Since $A_{k,l} \neq 0$, we have $m(k) \leq l$ because, by definition, $m(k)$ is the smallest index such that $A_{k,m(k)} \neq 0$. Thus, we have $k \geq l \geq m(k)$. Also, recall that $1 < i \leq j$ implies $m(i) \leq m(j)$. Hence, $k \geq l > 1$ implies $m(k) \geq m(l)$, and $l \geq m(k) > 1$ implies $m(l) \geq m^2(k)$, and so we obtain $k \geq l \geq m(k) \geq m(l) \geq m^2(k)$ assuming $l$ and $m(k)$ are not equal to $1$. By iteration, we observe that, for all $x$, we have $s_x \geq s_{x+1}$ if $s_y > 1$ for all $y \in \{2,\ldots,x-1\}$.

Let $s_1,\ldots,s_q$ be the largest prefix of $S$ that is strictly decreasing. Since $k>l$, we have $q \geq 2$. If $q = 2$, then $l = m(k)$ and so $A_{k,l} = A_{k,m(k)} = -A_{m(k),k} = A_{l,k}$ by the construction. Assume now that $q \geq 3$. Since $s_{q-2} \geq s_{q-1} > 1$, we have $s_q = m(s_{q-2}) \geq m(s_{q-1}) = s_{q+1}$. So, $s_q = s_{q+1}$. The principal submatrix induced by $s_1,\ldots,s_q$ is of the following form.
\[
\bordermatrix{
  & s_{q+1} = s_q & s_{q-1} &   \cdots & \cdots & \cdots & s_2 = l & s_1 = k \cr
s_{q+1} = s_q & 0 &      -1 &       -1 &        &        &         &         \cr
s_{q-1}       & 1 &       0 & -c_{q-3} &     -1 &        &         &         \cr
\vdots        & 1 & c_{q-3} &   \ddots & \ddots & \ddots &         &         \cr
\vdots        &   &       1 &   \ddots & \ddots & \ddots &      -1 &         \cr
\vdots        &   &         &   \ddots & \ddots & \ddots &    -c_1 &      -1 \cr
s_2 = l       &   &         &          &      1 &    c_1 &       0 & A_{l,k} \cr
s_1 = k       &   &         &          &        &      1 & A_{k,l} &       0
}
\]
Indeed, if $i \geq 3$, then $s_i = m(s_{i-2})$ and so $A_{s_i,s_{i-2}} = -1$, $A_{s_{i-2},s_i} = 1$, and the blank upper-right and lower-left regions have only zero entries. If all $c_i$'s in this matrix are zero, then the matrix is of the form of the lemma and so $A_{k,l} = -A_{l,k}$. Otherwise, let $t$ be the smallest value such that $c_t \neq 0$. Then the principal submatrix induced by $s_1,\ldots,s_{t+2}$ is of the form of the lemma and so also in this case we have $A_{k,l} = -A_{l,k}$.
\end{proof}

\begin{remark*}
One may wonder whether or not the result still holds if we are dropping the assumption that all entries are in $\{0,1,-1\}$ and allowing multiplication of rows/columns by arbitrary nonzero scalars. This is not the case. Indeed, every strict principal submatrix of the matrix $A$ of the form of the lemma with $n = 4$, $c = 1$, and allowing $b$ and $c$ to be arbitrary nonzero values is skew-symmetric up to multiplication of rows/columns by nonzero scalars. Moreover, by the proof of the lemma, $A$ is of even rank if and only if $a = 1 = -b$ or $a \neq 1 \neq -b$. Taking, e.g., $a = -1$ and $b \notin \{0,1,-1\}$ (such $b$ exists when the field is not $\mathrm{GF}(2)$ or $\mathrm{GF}(3)$) thus obtains a matrix where every principal submatrix is of even rank. However, it is easy to verify that this matrix is not skew-symmetric up to multiplication of rows/columns by nonzero scalars.
\end{remark*}

\begin{remark*}
In the proof of the theorem it is important to first simultaneously reorder the rows/columns 
before multiplying rows and columns by $-1$ in the way described in the proof. Indeed, 
\[
\begin{pmatrix}
0 &  0 & -1 &  0 \cr
0 &  0 &  0 & -1 \cr
1 &  0 &  0 &  1 \cr
0 &  1 &  1 &  0
\end{pmatrix}
\]
is a matrix $A$ 
such that every entry $A_{i,j}$ with $i = m(j)$ or $j = m(i)$ is equal to $1$ if $j \leq i$ and to $-1$ otherwise. While $A$ is not skew-symmetric, it \emph{can} be transformed into a skew-symmetric matrix by multiplying rows and columns by $-1$ (e.g., multiply the first column and the third row by $-1$). 
\end{remark*}

\begin{corollary*}
Let $A$ be an $n \times n$-matrix over $\mathrm{GF}(3)$. Then every principal submatrix of $A$ has even rank if and only if $A$ is skew-symmetric up to multiplication of rows/columns by $-1$.
\end{corollary*}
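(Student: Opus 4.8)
The plan is to derive the Corollary directly from the Theorem, with essentially no additional argument. The key observation is that $\mathrm{GF}(3) = \{0,1,2\}$ and $2 = -1$ in this field, so $\mathrm{GF}(3) = \{0,1,-1\}$ as a set. Consequently every $n \times n$-matrix over $\mathrm{GF}(3)$ has, trivially, all of its entries in $\{0,1,-1\}$, so the entry hypothesis of the Theorem is automatically satisfied.

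Given this, I would simply invoke the Theorem with the field taken to be $\mathrm{GF}(3)$ to conclude that every principal submatrix of $A$ has even rank if and only if $A$ is skew-symmetric up to multiplication of rows/columns by $-1$. There is no real obstacle here: the Corollary merely records that over $\mathrm{GF}(3)$ the entry restriction appearing in the Theorem comes for free.

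As a sanity check on why $\mathrm{GF}(3)$ is singled out, note that the same one-line reasoning would apply verbatim to $\mathrm{GF}(2)$, where again $\{0,1,-1\} = \{0,1\}$ exhausts the field. More to the point, the first Remark already explains why some restriction on the field (or on the allowed scalars) is genuinely needed once one phrases the statement with multiplication of rows/columns by \emph{arbitrary} nonzero scalars: the relevant counterexample uses a scalar $b \notin \{0,1,-1\}$, and such a scalar exists precisely when the field is neither $\mathrm{GF}(2)$ nor $\mathrm{GF}(3)$. Thus the Corollary is the natural sharp endpoint of the Theorem for small fields, and its proof is immediate.
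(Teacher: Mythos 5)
Your proof is correct and is exactly the intended argument: the paper leaves the corollary unproved precisely because, over $\mathrm{GF}(3)$, every entry automatically lies in $\{0,1,-1\}$ and the theorem applies verbatim. No gap, and no difference from the paper's (implicit) reasoning.
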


\bibliography{mmatroids}

\end{document}